\documentclass[12pt]{article}
\usepackage{amsmath,amsthm,amssymb,euscript,verbatim}
\pagestyle{plain}
\newtheorem{theorem}{Theorem}

\newtheorem{corollary}{Corollary}


\usepackage{makeidx}         
\usepackage{graphicx}        
\usepackage{multicol}        
\usepackage[bottom]{footmisc}

\makeindex             
\usepackage{latexsym}
\usepackage{epic,eepic,pstricks}

\begin{document}
\title
{\bf  Tight Spherical Embeddings\\ 
(Updated Version)}
\author
{Thomas E. Cecil\thanks{Partially supported by a Summer Faculty Fellowship from the College of the Holy Cross}
\ and Patrick J. Ryan\thanks{Partially supported by a Summer Faculty Fellowship from Indiana University}}
\maketitle

\begin{abstract}
This is an updated version of the paper \cite{CR6} which appeared in the proceedings of the 1979 Berlin Colloquium on Global Differential Geometry.  This paper 
contains the original exposition together with some notes by the authors made in 2025 (as indicated in the text)
that give references to descriptions of progress made in the field since the time of the original version of the paper.
The main result of this paper is that every compact isoparametric hypersurface 
$M^n \subset S^{n+1} \subset {\bf R}^{n+2}$ is tight, i.e., every non-degenerate linear height function
$\ell_p$, $p \in S^{n+1}$, has the minimum number of critical points on $M^n$ required by the Morse inequalities.
Since $M^n$ lies in the sphere $S^{n+1}$, this implies that $M^n$ is also taut in $S^{n+1}$, i.e., every non-degenerate 
spherical distance function has the minimum number of critical points on $M^n$.  A  second result is that the focal submanifolds of isoparametric hypersurfaces in $S^{n+1}$ must also be taut.
The proofs of these results are based on
M\"{u}nzner's \cite{Mu}--\cite{Mu2} fundamental work on the structure of a family of isoparametric hypersurfaces 
in a sphere.
\end{abstract}

In his lecture at the 1979 Chern Symposium at Berkeley, N. H. Kuiper pointed out that in all known examples of tight embeddings $M^n \subset S^{n+1}$ (Euclidean sphere), the manifold $M^n$ was diffeomorphic to a sphere or a product of spheres.  He asked if there were more examples, and if so, whether they must necessarily be algebraic.  The main result of this paper is to show that every compact isoparametric hypersurface is tight.  This provides an infinite collection of new examples of tight spherical hypersurfaces as can be seen from the work of Takagi and Takahashi \cite{TT}--\cite{Takagi},
Ozeki and Takeuchi \cite{OT}--\cite{OT2} and more recently the results of Ferus, Karcher and M\"{u}nzner \cite{FKM}.

Although the compact isoparametric hypersurfaces have not yet been classified, M\"{u}nzner \cite{Mu}--\cite{Mu2} has obtained a sufficiently explicit description of their topology and geometry to allow us to prove that they must be tight. 
M\"{u}nzner's proof also establishes the fact that each isoparametric hypersurface is algebraic.  Thus the second part of Kuiper's question remains open.\\

\noindent
{\bf Note 1:} (added by the authors in 2025) {\it Through the work of many researchers, the classification of isoparametric hypersurfaces in spheres has now been completed.  See, for example, the survey article of Chi \cite{Chi-survey} or the book \cite[pp. 144--182]{CR8} for descriptions of the classification.}\\

Our second result is that the focal submanifolds of isoparametric hypersurfaces must also be tight.  This path of exploration leads to some interesting questions concerning tightness and the structure of the focal set which we list at the end of the paper.

Our results also provide many new examples of taut submanifolds (both compact and non-compact) of Euclidean space.  These are obtained as the images of isoparametric hypersurfaces and their focal sets under stereographic projection.\\

\noindent
{\bf 1. Preliminaries.}\\

\noindent
{\bf 1a. Tight and taut immersions.}\\

Let $f:M \rightarrow E^m$ be an immersion of a smooth manifold into Euclidean space.  Let $S^{m-1}$ denote the unit sphere in $E^m$.  For $\xi \in S^{m-1}$, the linear height function $\ell_{\xi}:M \rightarrow {\bf R}$ is defined by 
$\ell_{\xi} (x) = \langle f(x), \xi \rangle$, where $\langle\ ,\ \rangle$ is the Euclidean inner product.  For $p \in E^m$,
the Euclidean distance function $L_p:M \rightarrow {\bf R}$ is defined by $L_p (x) = | f(x) - p |^2$.
One shows using Sard's theorem that almost all of the functions of each type are non-degenerate on $M$.
The immersion $f$ is called {\em tight}, respectively, {\em taut}, if every non-degenerate $\ell_{\xi}$, respectively
$L_p$, has the minimum number of critical points required by the Morse inequalities.  Tightness was first discussed in the form of minimal total absolute curvature by Chern and Lashof \cite{CL1}.  The formulation in terms of critical point theory is due to Kuiper \cite{Ku1}.

For a two-dimensional manifold $M$, tightness, respectively, tautness, is equivalent to the two-piece property of Banchoff with respect to Euclidean hyperplanes \cite{Ban2}, respectively, hyperspheres \cite{Ban1}.  Since a hyperplane can be approximated arbitrarily closely by a hypersphere of large radius, one sees that tautness implies tightness for two-dimensional manifolds.  By a similar argument, one shows that this is true for arbitrary manifolds \cite{CW1}.
Moreover, a taut immersion must be an embedding \cite{CW1}, whereas a tight immersion need not be.

An immersion $f:M\rightarrow E^m$ is called {\em spherical} if the image of $f$ lies in a Euclidean hypersphere
$S^{m-1} \subseteq E^m$.  If $f$ is tight and spherical, it is easy to show that $f$ is taut \cite{Ban1}.  Moreover, using the fact that tautness is a conformal property \cite{CW1}, one then can obtain a taut embedding of $M$ into $E^{m-1}$ by composing $f$ with stereographic projection into $E^{m-1}$.  Thus, {\em tautness is equivalent to the combination of tight and spherical} in the sense that if $M$ is a compact manifold which is not a sphere, then there exists a substantial 
non-spherical taut embedding of $M$ into $E^{m-1}$ if and only if there exists a substantial tight spherical embedding of $M$ into $E^m$.\\

\noindent
{\bf 1b. Distance functions and focal points in the sphere.}\\

It is advantageous at times to consider distance functions in the sphere rather than linear height functions.  Let $f:M \rightarrow S^m$ be an immersion.  For $p \in S^m$, $x \in M$, the spherical distance function $d_p$ has the form
\begin{displaymath}
d_p (x) = \cos^{-1} \langle p, f(x) \rangle.
\end{displaymath}
Clearly $d_p$ and the linear height function $\ell_p$ have the same critical points.  As in the case of Euclidean distance functions \cite[p. 32--38]{Mil}, there is a natural formula for the index of a critical point of $d_p$ in terms of the focal points of $M$ in $S^m$ which we will now develop.

Let $NM$ denote the normal bundle of $M$ in $S^m$ with projection map $\pi$ onto $M$.  The exponential map
$E:NM \rightarrow S^m$ is defined by letting $E(\xi)$ be the point reached by traversing a distance $| \xi |$ along the geodesic in $S^m$ with initial tangent vector $\xi$.
A point $p \in S^m$ is called a {\em focal point of $(M,x)$ of multiplicity} $\nu > 0$ if $p = E(\xi)$, where 
$x = \pi \xi$ and the Jacobian of $E$ has nullity $\nu$ at $\xi$.

Assume now that $|\xi| = 1$ and let $x = \pi \xi$.  A straightforward computation shows that $p = E(t \xi)$ is a focal point of multiplicity $\nu$ of $(M,x)$ if and only if $\cot (t)$ is an eigenvalue of the shape operator $A_\xi$ of multiplicity $\nu$.  Note that each principal curvature $\lambda$ of $A_\xi$ gives rise to two distinct focal points of $(M,x)$ of the form
\begin{displaymath}
p = \cos t \  x + \sin t\ \xi
\end{displaymath}
for $t = \cot^{-1} \lambda$ and $t = \cot^{-1} \lambda - \pi$. (For further discussion, see \cite{CR1}.)

The relationship between focal points and spherical distance functions is embodied in the next result whose proof is completely analogous to the corresponding result in the Euclidean case \cite {Mil}.

\begin{theorem}
\label{theorem-1}
(Index Theorem for spherical distance functions).

\noindent
Let $f:M \rightarrow S^m$ be an immersion and let $p \in S^m$.\\

\noindent
(i) $d_p$ has a critical point at $x \in M$ if and only if $p$ lies on a normal 
geodesic to $f(M)$ at $f(x)$.\\

\noindent
(ii) $x$ is a degenerate critical point of $d_p$ if and only if $p$ is a focal point 
of $(M,x)$.\\

\noindent
(iii) If $x$ is a non-degenerate critical point of $d_p$, then the index of $d_p$ at 
$x$ is equal to the number of focal points (counting multiplicities) on the 
shortest geodesic from $p$ to $f(x)$.
\end{theorem}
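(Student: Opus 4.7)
The plan is to mirror Milnor's Euclidean argument, replacing the Euclidean second variation by its spherical analogue via the Gauss formula for the inclusion $S^m \subset {\bf R}^{m+1}$.

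For part (i), I would differentiate directly. For a curve $c(s)$ in $M$ with $c(0)=x$ and $\dot c(0)=v$, the chain rule applied to $d_p=\cos^{-1}\langle p,f\rangle$ gives $(d_p\circ c)'(0)=-\langle p,df_x(v)\rangle/\sin d_p(x)$ (provided $p\neq\pm f(x)$). Hence $x$ is critical iff $p$ is perpendicular to $df_x(T_xM)$ in ${\bf R}^{m+1}$. Combined with $|p|=|f(x)|=1$, this forces $p$ to lie in the $2$-plane spanned by $f(x)$ and some unit normal $\xi\in N_xM$, i.e., on the normal geodesic $s\mapsto\cos(s)f(x)+\sin(s)\xi$.

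For (ii) and (iii), I would replace $d_p$ by the smoother $\phi(x)=-\langle p,f(x)\rangle=-\cos d_p(x)$. Since $d\phi=\sin(d_p)\,d(d_p)$, the two functions share critical points, and at such a point with $0<d_p(x)<\pi$ the Hessians differ by the positive factor $\sin d_p(x)$, so indices and nullities agree. To compute $\mathrm{Hess}(\phi)_x$, choose $c$ to be a geodesic of $M$ with $\dot c(0)=v$. Decomposing the second derivative of $f\circ c$ in ${\bf R}^{m+1}$ into its component normal to $S^m$ (contributing $-|v|^2 f(x)$ from the sphere's curvature) and its component normal to $M$ inside $S^m$ (contributing $B_x(v,v)$, the second fundamental form of $M$ in $S^m$), one obtains $\ddot{(f\circ c)}(0)=B_x(v,v)-|v|^2 f(x)$. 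Writing the critical value as $p=\cos(t)f(x)+\sin(t)\xi$ with $t=d_p(x)$, and using $\langle B_x(v,v),\xi\rangle=\langle A_\xi v,v\rangle$, this yields
\[
\mathrm{Hess}(d_p)_x(v,v) \;=\; \cot(t)\,|v|^2 - \langle A_\xi v,v\rangle.
\]

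Diagonalizing $A_\xi$ in an orthonormal basis of $T_xM$ with eigenvalues $\lambda_1,\ldots,\lambda_n$, the Hessian of $d_p$ is diagonal with entries $\cot(t)-\lambda_i$. Its nullity is the multiplicity of $\cot(t)$ as an eigenvalue of $A_\xi$, which by the earlier paragraph identifying focal points is exactly the multiplicity of $p$ as a focal point of $(M,x)$; this gives (ii). The index equals $\#\{i:\lambda_i>\cot(t)\}$, and since $\cot$ is strictly decreasing on $(0,\pi)$ this is $\#\{i:\cot^{-1}(\lambda_i)\in(0,t)\}$. Each such $i$ produces a focal point $\cos(s_i)f(x)+\sin(s_i)\xi$ with $s_i=\cot^{-1}(\lambda_i)$ lying strictly between $f(x)$ and $p$ on the minimizing normal geodesic, counted with the appropriate multiplicity; this gives (iii). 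The only real obstacle is bookkeeping: keeping sign conventions for $A_\xi$ and $B$ consistent, and tracking the edge cases $p=\pm f(x)$ or $d_p(x)\in\{0,\pi\}$, which are either excluded by non-degeneracy or handled by direct inspection.
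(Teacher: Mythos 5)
Your proposal is correct and follows exactly the route the paper intends: the paper gives no proof of Theorem~\ref{theorem-1} beyond the remark that it is ``completely analogous to the corresponding result in the Euclidean case'' of Milnor, and your computation (passing to $-\langle p,f\rangle$, splitting $\ddot{(f\circ c)}$ into the second fundamental forms of $M\subset S^m$ and $S^m\subset{\bf R}^{m+1}$, and diagonalizing $A_\xi$ to get Hessian eigenvalues $\cot(t)-\lambda_i$) is a faithful spherical transcription of that argument, consistent with the paper's identification of focal points with parameters $\cot^{-1}(\lambda)$.
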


Consider the special case when $M \subset S^m$ is a compact embedded hypersurface.  Let $\xi$ be a global field of 
unit normals on $M$.  Then $NM$ is diffeomorphic to ${\bf R} \times M$ and the exponential map has the form
\begin{displaymath}
E(t,x) = \cos t \  x + \sin t\ \xi (x).
\end{displaymath}
From the Index Theorem one immediately obtains the following result.

\begin{corollary}
\label{corollary-1}
Let $M \subset S^m$ be a compact embedded hypersurface.  If $d_p$ is a non-degenerate function on $M$, 
then the number of critical points of $d_p$ on $M$ is equal to the number of inverse images under $E$ of $p$ in the set 
$(-\pi, \pi] \times M$.\\
\end{corollary}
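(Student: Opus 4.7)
The plan is to establish a bijection between the set of critical points of $d_p$ on $M$ and the set $E^{-1}(p) \cap ((-\pi,\pi] \times M)$, using Theorem \ref{theorem-1}(i) together with the explicit formula for $E$ given just before the corollary.

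First I would examine the restriction of the exponential map to the fiber over a fixed point $x \in M$. Since $M \subset S^m$ and $\xi(x)$ is a unit normal to $M$ at $x$ inside $S^m \subset \mathbf{R}^{m+1}$, the vectors $x$ and $\xi(x)$ are orthonormal in $\mathbf{R}^{m+1}$. Hence the map $t \mapsto \cos t\, x + \sin t\, \xi(x)$ parametrizes the great circle $\gamma_x$ in $\mathrm{span}(x,\xi(x))$, which is precisely the normal geodesic to $f(M)$ at $f(x)$. Because $\{(\cos t, \sin t) : t \in (-\pi,\pi]\}$ covers the unit circle exactly once, the restriction
\begin{displaymath}
E(\cdot,x) : (-\pi,\pi] \longrightarrow \gamma_x
\end{displaymath}
is a bijection. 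In particular, for each $x \in M$ and each $p \in \gamma_x$, there is a unique $t \in (-\pi,\pi]$ with $E(t,x) = p$.

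Next I would invoke Theorem \ref{theorem-1}(i): a point $x \in M$ is a critical point of $d_p$ if and only if $p$ lies on the normal geodesic $\gamma_x$, that is, if and only if there exists some (and by the previous paragraph, exactly one) $t \in (-\pi,\pi]$ with $E(t,x) = p$. This yields a well-defined map
\begin{displaymath}
\Phi : E^{-1}(p) \cap \bigl((-\pi,\pi] \times M\bigr) \longrightarrow \{\text{critical points of } d_p\}, \qquad \Phi(t,x) = x,
\end{displaymath}
which is surjective by the ``only if'' direction and injective by the uniqueness of $t$. The non-degeneracy hypothesis on $d_p$, via part (ii) of Theorem \ref{theorem-1}, guarantees that $p$ is not a focal point at any critical point, so $E$ is a local diffeomorphism at every element of $E^{-1}(p)$; combined with the compactness of $(-\pi,\pi] \times M$, this ensures both sets are finite, so counting the elements of $\Phi$'s domain and codomain gives the claimed equality.

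There is no real obstacle here; the whole argument is a bookkeeping consequence of Theorem \ref{theorem-1}(i) and the periodic parametrization of normal great circles. The only point demanding a touch of care is the choice of the half-open interval $(-\pi,\pi]$: a closed interval $[-\pi,\pi]$ would double-count the antipodal point $-x$, while an open interval $(-\pi,\pi)$ would miss it, so the interval in the statement is exactly calibrated to give a bijection with $\gamma_x$.
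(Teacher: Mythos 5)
Your argument is correct and is essentially the paper's intended reasoning: the corollary is stated as an immediate consequence of Theorem \ref{theorem-1}(i), and your bijection $\Phi(t,x)=x$, justified by the fact that $t\mapsto \cos t\, x+\sin t\,\xi(x)$ parametrizes the normal great circle exactly once for $t\in(-\pi,\pi]$, is precisely the bookkeeping the paper leaves implicit. (One trivial slip: $(-\pi,\pi]\times M$ is not compact; finiteness follows instead from the fact that a non-degenerate function on a compact manifold has finitely many critical points, which transfers to the preimage set via your bijection.)
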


\noindent
{\bf 2. Isoparametric families of hypersurfaces in spheres.}\\

An embedded hypersurface $M^n \subset S^{n+1}$ is called {\em isoparametric} if its principal curvatures are constant.  For the following local considerations, we assume that there is a field of unit normals $\xi$ to $M^n$. A straight-forward
calculation (see, for example, Nomizu \cite{Nom4}) shows that the parallel sets
\begin{displaymath}
M_t = \{  \cos t \  x + \sin t\ \xi (x) \mid x \in M^n \}
\end{displaymath}
are also hypersurfaces for sufficiently small $t$, and moreover each $M_t$ also has constant principal curvatures.  The collection $M_t$ is called an {\em isoparametric family of hypersurfaces}.  In addition, the computation of the principal curvatures of $M_t$ shows that all the hypersurfaces in the family have the same focal set in $S^{n+1}$.

We now briefly cite two examples.  First when $M^n$ is a great or small sphere in $S^{n+1}$, then the parallel
hypersurfaces $M_t$ are also great or small spheres.  Of course, these are the only hypersurfaces with one constant 
principal curvature of multiplicity $n$.  The only examples with two distinct constant principal curvatures are families of products
\begin{displaymath}
M_t = S^k (t) \times S^{n-k} ((1 - t^2)^{1/2}) \subset S^{n+1},\  0 < t < 1.
\end{displaymath}
We refer the reader to Takagi and Takahashi \cite{TT} for a list of the homogeneous examples, and to Ozeki and Takeuchi
\cite{OT}--\cite{OT2} for a large collection of examples including many non-homogeneous ones.

Isoparametric hypersurfaces were first studied extensively by E. Cartan \cite{Car2}--\cite{Car5}.  In his original treatment, Cartan actually began with a different definition which turns out to be equivalent to the above (see, for example,
Nomizu \cite{Nom4} for a concise treatment of the equivalence of the definitions).  Let $\phi:S^{n+1} \rightarrow {\bf R}$
be a differentiable $(C^\infty)$ function.  Then the level set
\begin{displaymath}
M_s = \{x \in S^{n+1} \mid \phi (x) = s \}
\end{displaymath}
is a non-singular hypersurface if grad $\phi \neq 0$ on $M_s$.  Cartan pointed out that if the Laplacian $\Delta \phi$
and the norm of the gradient $|{\rm grad}\ \phi |$ were functions of $\phi$ itself,
then each hypersurface $M_s$ has constant principal curvatures, and hence $\{M_s\}$ is an isoparametric family
although, in general, $s$ is not an arc-length parameter, as $t$ is in the case of parallel hypersurfaces.

Cartan showed that if each $M_s$ has 3 distinct principal curvatures, then the principal curvatures have the same multiplicity, and moreover, each $M_s$ is a level set of the restriction to $S^{n+1}$ of a harmonic homogeneous polynomial of degree 3 on $E^{n+2}$.

In a far-reaching generalization of Cartan's methods and results, M\"{u}nzner \cite{Mu}--\cite{Mu2} showed that each isoparametric hypersurface is algebraic in a precise sense.  We briefly outline his results in the next two
paragraphs.

M\"{u}nzner showed that the number $g$ of distinct constant principal curvatures of an isoparametric hypersurface $M^n$ must be $1,2,3,4$ or 6.  The $2g$ focal points along each normal great circle to $M^n$ are equally spaced at intervals of length $\pi/g$.  There are at most two distinct multiplicities $m_1$ and $m_2$ of distinct principal curvatures.
If $m_1 \neq m_2$, then $g$ is even, and if $\lambda_1 >  \ldots > \lambda_g$ are the distinct principal curvatures, then 
$\lambda_i$ has multiplicity $m_1$ if $i$ is odd and $m_2$ if $i$ is even.  Finally, there is a homogeneous polynomial $F:E^{n+2} \rightarrow {\bf R}$ of degree $g$ satisfying
\begin{eqnarray}
|{\rm grad}\ F |^2 & = & g^2 r^{2g-2} \quad ({\rm gradient \ in}\ E^{n+2}) \nonumber \\
\Delta F & = &  c r^{g-2}  \quad\  \ ({\rm Laplacian \ in}\ E^{n+2}) \nonumber
\end{eqnarray}
such that $M^n$ is an open subset of some level set $M_s$ of the restriction of $F$ to $S^{n+1}$. Here, $r = |x|$ and 
\begin{displaymath}
c = \left(\frac{m_1 - m_2}{2} \right) g^2.
\end{displaymath}
Thus, if $m_1 = m_2$, then $F$ is harmonic, as Cartan knew.  Let $V$ be the restriction of $F$ to $S^{n+1}$.  
M\"{u}nzner's construction shows that $V$ maps $S^{n+1}$ into $[-1,1]$.  For $-1 < s < 1$, $M_s$ is a
compact isoparametric hypersurface, while $M_1$ and $M_{-1}$ are focal submanifolds of dimensions $n-m_1$ 
and $n-m_2$, respectively.\\

\noindent
{\bf Note 2:} (added by the authors in 2025) {\it M\"{u}nzner also proved that all of the level sets of $F$, including the two
focal submanifolds, are connected.}\\

A crucial part of M\"{u}nzner's construction from the point of view of this paper is the behavior of $V$ with respect
to the normal exponential map.  M\"{u}nzner makes a change of parameter for the normal exponential map which
simplifies his exposition.  Suppose $\cot \theta$ is the largest positive principal curvature of $M$, and define
$E: {\bf R} \times M \rightarrow S^{n+1}$ by
\begin{displaymath}
E(t,x) = \cos \  (\theta - t)\  x + \sin \  (\theta - t)\  \xi (x).
\end{displaymath}
As a result of M\"{u}nzner's construction, 
\begin{equation}
\label{eq:1}
V(E(t,x)) = \cos (gt).
\end{equation}
Thus $E(0,x)$ and $E(\pi/g,x)$ are focal points of $(M^n,x)$ on $M_1$ and $M_{-1}$, respectively.  As 
M\"{u}nzner points out,
\begin{displaymath}
E: \left( 0, \frac{\pi}{g} \right) \times M \rightarrow S^{n+1} - \{M_1 \cup M_{-1} \}
\end{displaymath}
is a diffeomorphism.  By \eqref{eq:1}, this is also true if $( 0, \pi/g)$ is replaced by an interval 
$( k \pi/g, (k+1)\pi/g)$.  Consequently, if $d_p$ is non-degenerate, i.e., $p \notin M_1 \cup M_{-1}$,
then $p$ has $2g$ distinct inverse images under $E$ in $(- \pi, \pi] \times M^n$.  Combining this with the
Corollary in Section 1, we have\\

\noindent
{\bf Proposition 2:}  {\it Let $M^n \subset S^{n+1}$ be a compact isoparametric hypersurface.  Then every non-degenerate distance function $d_p$ has $2g$ critical points on $M^n$.}\\

\noindent
Note also that the indices of the critical points can be determined from the multiplicities $m_1$ and $m_2$.

Using the fact that $M^n$ divides $S^{n+1}$ into two disk bundles, an $(m_1 + 1)$-dimensional disk bundle over $M_1$ and an $(m_2 + 1)$-dimensional disk bundle over $M_{-1}$,  M\"{u}nzner computes the cohomology of $M^n$, $M_1$ and $M_{-1}$.  The sum of the ${\bf Z}_2$-Betti numbers of each isoparametric hypersurface is $2g$, while the sum 
of the ${\bf Z}_2$-Betti numbers of $M_1$ or $M_{-1}$ is $g$.  We now state the main result.

\begin{theorem}
\label{theorem-2}
(a) Every isoparametric hypersurface in $S^{n+1}$ is tight.\\
(b) Each of the focal submanifolds of an isoparametric hypersurface in $S^{n+1}$ is tight.
\end{theorem}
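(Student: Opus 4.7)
Part (a) is immediate from the machinery already in place. Proposition 2 asserts that every non-degenerate $d_p$ on $M^n$ has exactly $2g$ critical points, which coincides with the sum of ${\bf Z}_2$-Betti numbers of $M^n$ as computed by M\"{u}nzner. Thus every non-degenerate spherical distance function attains the Morse lower bound, so $M^n$ is taut in $S^{n+1}$; the implication from tautness to tightness recalled in Section 1a then yields tightness of $M^n$.

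For part (b) I would count the critical points of $d_p$ on the focal submanifold $M_1$ (the argument for $M_{-1}$ is symmetric) by exploiting the focal projection $\phi : M^n \to M_1$ defined by $\phi(x) = E(0,x)$, which by the disk bundle structure cited from M\"{u}nzner is a sphere bundle with $S^{m_1}$ fibers. The main geometric step is to show that the $2g$ critical points of $d_p$ on $M^n$ all lie on a single great circle $\gamma \subset S^{n+1}$. Starting from any critical point $x_0$, its normal great circle contains $p$ and, by a direct computation using the parametrization $t \mapsto E(t, x_0)$ together with the identity $V(E(t,x_0)) = \cos(gt)$, meets $M^n$ in exactly $2g$ points. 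At each such intersection $x$, the tangent to the great circle is necessarily normal to $M^n$, since neighboring points of the great circle lie on other parallel isoparametric hypersurfaces rather than on $M^n$; hence the great circle coincides with the normal great circle at $x$, and $x$ is itself a critical point of $d_p$. By Proposition 2 these $2g$ intersection points exhaust the critical set, so all critical points lie on $\gamma$.

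The same transversality argument identifies each of the $g$ points of $\gamma \cap M_1$ as a critical point of $d_p$ on $M_1$. For the converse inclusion, suppose $y \in M_1$ is a critical point of $d_p$; then $p$ lies on some normal geodesic to $M_1$ at $y$, and this geodesic has the form $s \mapsto \cos s \cdot x + \sin s \cdot \xi(x)$ for a unique $x \in \phi^{-1}(y) \subset M^n$, because the $m_1$-sphere fiber $\phi^{-1}(y)$ parametrizes the unit normal sphere of $M_1$ at $y$ via the tangent vector picked up at arclength $\theta$. Hence $x$ is a critical point of $d_p$ on $M^n$, so $x \in \gamma$, and the normal great circle through $x$ shares the non-antipodal pair $\{x, p\}$ with $\gamma$ and therefore coincides with $\gamma$, placing $y = \phi(x)$ in $\gamma \cap M_1$. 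Consequently $d_p$ has exactly $g$ critical points on $M_1$, matching the ${\bf Z}_2$-Betti sum; the same count yields $g$ critical points on $M_{-1}$. Non-degeneracy at each critical point follows from Theorem~\ref{theorem-1} once one observes that M\"{u}nzner's structure places the focal set of each focal submanifold inside $M_1 \cup M_{-1}$, which our standing choice $p \notin M_1 \cup M_{-1}$ excludes. The step I expect to be most delicate is the identification of the fiber $\phi^{-1}(y)$ with the unit normal sphere of $M_1$ at $y$, which requires carefully translating M\"{u}nzner's fibered description of $M^n$ over $M_1$ into a statement about normal directions and arclengths along the geodesics that realize the sphere bundle structure.
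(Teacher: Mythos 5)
Your proposal is correct and takes essentially the same approach as the paper: part (a) follows from Proposition 2 together with M\"{u}nzner's computation that the sum of the ${\bf Z}_2$-Betti numbers of $M^n$ is $2g$, and part (b) rests on the observation that every normal great circle to a focal submanifold through $p$ is normal to the whole isoparametric family, that there is only one such great circle through $p$, and that it meets the focal submanifold in exactly $g$ points, matching its ${\bf Z}_2$-Betti number sum. Your additional steps (first placing the $2g$ critical points of $d_p$ on $M^n$ on one great circle, and identifying the fiber $\phi^{-1}(y)$ with the unit normal sphere of $M_1$ at $y$) are elaborations of the same argument rather than a different method.
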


\begin{proof}
The proof of (a) is immediate from Proposition 2 since each $d_p$, and hence each $\ell_p$, has the minimum number of critical points required by the ${\bf Z}_2$-homology of $M$.

(b) First note that the set of $p$ such that $d_p$ is non-degenerate on $M_1$ is again $S^{n+1} - \{M_1 \cup M_{-1}\}$.
This follows from M\"{u}nzner's computation of the shape operators for $M_1$.  A great circle through $p$ which intersects $M_1$ orthogonally is also normal to the isoparametric family of hypersurfaces $M_s, \ -1 < s < 1$.  However,
by M\"{u}nzner's construction, there is only one great circle through $p$ which is normal to the $M_s$.  This great circle intersects each hypersurface $M_s$  $2g$ times. Thus, all of the critical points of $d_p$ on $M_1$ lie on one great circle.
There are $g$ points of $M_1$ on each normal great circle, so each non-degenerate $d_p$ has $g$ critical points on $M_1$.  Comparing this with the sum of the ${\bf Z}_2$-Betti numbers of $M_1$ proves that $M_1$ is tight.  To complete the proof of (b), we note that the argument remains equally valid when $M_1$ and $M_{-1}$ are interchanged.
\end{proof}

\noindent
{\bf Remark:}
 Nomizu \cite{Nom4} proved that the focal submanifolds must also be minimal in the sense of zero mean curvature.\\

\noindent
{\bf 3.  Examples and classifications of tight hypersurfaces in spheres.}\\

As mentioned earlier, spheres and products of two spheres are examples of isoparametric hypersurfaces in $S^{n+1}$
which were previously known to be tight.  Nomizu and Rodriguez have shown \cite{NR} that the only
tight embeddings of $S^n$ into $S^{n+1}$ are the round spheres.

For $M^n = S^k (s) \times S^{n-k} ((1 - s^2)^{1/2})$ in $S^{n+1}$, note that the focal set consists of two totally geodesic 
spheres 
\begin{displaymath}
M_1 = S^k (1) \times \{0\}, \quad M_{-1} = \{ 0 \} \times S^{n-k} (1).
\end{displaymath}  
The images of $M^n$ under a conformal transformation of $S^{n+1}$ (or under stereographic projection into $E^{n+1}$) are called {\em cyclides of Dupin}.
The cyclides were characterized in \cite{CR2} as the only codimension one tight spherical embeddings of a manifold $M^n$ with the same integral homology as $S^k \times S^{n-k}$.

As we have stated, there are infinitely many examples of isoparametric hypersurfaces.  Here we briefly mention two homogeneous examples which are noteworthy because of their familiarity.  Cartan showed that every isoparametric 
$M^3 \subset S^4$ must be of the form of the first example.\\

\noindent
{\bf Example 1:}\\

\noindent
Let $S^4$ be the set of $3 \times 3$ symmetric matrices of norm 1 and trace 0.  Let $SO (3)$ act on $S^4$ by conjugation.  
For $0 < t < \pi/3$, let $M_t^3$ be the orbit of the diagonal matrix whose entries are
\begin{displaymath}
\sqrt{\frac{2}{3}} \ \left( \cos (t - \frac{\pi}{3}),\  \cos (t + \frac{\pi}{3}), \ \cos (t + \pi) \right).
\end{displaymath}  
Each $M_t^3$ is an isoparametric hypersurface with 3 distinct principal curvatures.  $M_t^3$ is diffeomorphic to
$SO(3)/ {\bf Z}_2 \times {\bf Z}_2$, where ${\bf Z}_2 \times {\bf Z}_2$ is the group of diagonal matrices with $\pm 1$ along the diagonal and determinant 1.  Setting $t = 0$ or $t = \pi/3$ yields a Veronese surface.  Specifically,
\begin{displaymath}
M_0 = {\rm orbit\ of\ diagonal\ matrix\ } \frac{1}{\sqrt{6}} \ (1,1, -2)
\end{displaymath}  
and
\begin{displaymath}
M_{\frac{\pi}{3}} = {\rm orbit\ of\ diagonal\ matrix\ } \frac{1}{\sqrt{6}} \ (2, -1, -1)
\end{displaymath}  
are Veronese surfaces whose union is the focal set of each $M_t^3$.\\

\noindent
{\bf Example 2:}\\

\noindent
This is worked out in detail by Nomizu \cite{Nom4}.  We will simply note that $M^{2n} \subset S^{2n+1}$ has four distinct principal curvatures of multiplicities $n-1, 1, n-1, 1$.  (Takagi  \cite{Takagi}  proved that this is the only example with this configuration of multiplicities.) $M^{2n}$ is diffeomorphic to $V_{n+1,2}  \times S^1$, where $V_{n+1,2}$ is the Stiefel
manifold of orthonormal 2-frames in $E^{n+1}$.  One sheet of the focal set is a naturally embedded $V_{n+1,2}$, 
while the other is diffeomorphic to $S^n \times S^1$.\\

\noindent
{\bf Remark:}
Aside from the spheres and cyclides, the only known tight hypersurfaces in spheres known until now were of
the following type constructed by Carter and West \cite[p. 718]{CW1}.  Let $M$ be the standard product,
\begin{displaymath}
M = S^{k_1} \times \ldots \times S^{k_r} \subset S^{k_1 + \ldots + k_r +r -1 =m}.
\end{displaymath}  
Then $M$ is a codimension $r-1$ tight embedding.  A tube $\widetilde{M}$ of sufficiently small radius is a tight codimension one embedding of $M \times S^{r-2}$ into $S^m$.\\

\noindent
{\bf 4.  Tight embeddings and focal sets.}\\

Let $M^n \subset S^{n+1}$ be an isoparametric hypersurface or the image of one under a conformal transformation
of $S^{n+1}$.  Then each principal curvature $\lambda$ of $M^n$ satisfies\\

\noindent
$(*)$ $\lambda$ has constant multiplicity $\nu$ on $M^n$, and $\lambda$ is constant along the leaves of its principal foliation $T_\lambda$.\\

\noindent
{\bf Note 3:} (added by the authors in 2025)  {\it Hypersurfaces $M^n$ in $S^{n+1}$ or $E^{n+1}$ whose principal curvatures all satisfy
property $(*)$ are now called proper Dupin hypersurfaces, using the terminology introduced by Pinkall  \cite{P4}.
The field of  Dupin hypersurfaces has been extensively studied by many researchers (see,
for example \cite[pp. 233--342]{CR8})}.\\

\noindent
Note that the second condition is automatically satisfied if $\nu > 1$ \cite[p. 372--373]{Ryan1}.  In a previous
paper \cite{CR1}, it was shown that $(*)$ implies\\

\noindent
(1) The leaves of $T_\lambda$ are $\nu$-dimensional Euclidean spheres.\\

\noindent
(2) The sheet $f_\lambda (M)$ of the focal set of $M^n$ corresponding to $\lambda$ is an $(n - \nu)$-dimensional submanifold of $S^{n+1}$, and $M^n$ is a $\nu$-sphere bundle over $f_\lambda (M)$.\\

The same theorem holds for $M^n \subset E^{n+1}$ except that the leaves of $T_\lambda$ can be $\nu$-spheres or 
$\nu$-planes and each component $U$ of the subset of $M^n$ on which $\lambda \neq 0$ is a $\nu$-sphere
bundle over $f_\lambda (U)$.  H. Reckziegel has generalized these results in a recent paper \cite{Reck2}.
The compact cyclides and the complete non-compact cyclide in $E^{n+1}$ obtained by taking
the pole of the stereographic projection 
to be on $S^k \times S^{n-k}$ were characterized in terms of the condition $(*)$ as follows in \cite{CR2}
and \cite{CR5}.\\

\noindent
{\bf Theorem}. {\it Let $M^n \subset E^{n+1}$ be a connected, complete hypersurface having two distinct principal curvatures at each point, both of which satisfy $(*)$.  Then $M^n$ is a cyclide.}\\

\noindent
{\bf Questions:}\\

\noindent
1. Can any other isoparametric hypersurfaces and their conformal images be characterized similarly?   In particular, Cartan's $M^3 \subset S^4$.\\

\noindent
2.  For $M^n \subset S^{n+1}$, does the assumption that each principal curvature of $M$ satisfies $(*)$ imply that $M^n$ 
is a conformal image of an isoparametric hypersurface?\\

\noindent
3.  Can any of the isoparametric hypersurfaces be characterized (up to conformality) as the only tight hypersurfaces
having a given homology, as $S^n$ and $S^k \times S^{n-k}$ have been?\\

\noindent
{\bf Note 4:} (added by the authors in 2025)  {\it Regarding Questions 1 and 2, Miyaoka \cite{Mi1} proved that a compact,
connected proper Dupin hypersurface $M^n \subset S^{n+1}$ with $g=3$ principal curvatures is equivalent by a Lie sphere transformation to an isoparametric hypersurface, but it is not necessarily M\"{o}bius equivalent to an isoparametric hypersurface.
Later, Pinkall and Thorbergsson \cite{PT1}, and Miyaoka and Ozawa \cite{MO}, gave separate constructions of compact,
connected proper Dupin hypersurfaces in $S^{n+1}$ that are not 
equivalent by a Lie sphere transformation to an isoparametric 
hypersurface in $S^n$. 
All three of the questions above are related to classifications of compact proper
Dupin hypersurfaces $M^n$ in the sphere $S^{n+1}$.  See  the book \cite[pp. 308--322]{CR8} for a survey of results
on this topic.}\\

In a recent paper \cite{CW2}, Carter and West defined an embedded submanifold $M \subset E^m$ to be {\em totally focal}
if every Euclidean distance function has the property that either all of its critical points are non-degenerate or all
of its critical points are degenerate.  If a totally focal submanifold is defined analogously, the following is clear 
from M\"{u}nzner's construction:\\

\noindent
{\bf Proposition 3:}  
{\it Every isoparametric hypersurface $M^n \subset S^{n+1}$ is totally focal.}\\

\noindent
Carter and West proved that the only totally focal hypersurfaces in $E^{n+1}$ are spheres, planes, and spherical cylinders
$S^k \times E^{n-k}$.  These are precisely the hypersurfaces of $E^{n+1}$ which are isoparametric, i.e., have constant principal curvatures.  Thus we ask\\

\noindent
4.  Are the isoparametric hypersurfaces in $S^{n+1}$ the only totally focal hypersurfaces of $S^{n+1}$?\\

\noindent
{\bf Note 5:} (added by the authors in 2025)  {\it Question 4 was answered in the affirmative by Carter and West \cite{CW4}.  Carter and West also discussed totally focal submanifolds of higher codimension in a
later paper \cite{CW8}.
For a survey of results on the relationship between isoparametric and totally
focal submanifolds,  see  \cite[pp. 139---140]{CR8}.}\\

\noindent Thomas E. Cecil

\noindent Department of Mathematics and Computer Science

\noindent College of the Holy Cross

\noindent Worcester, MA 01610

\noindent email: tcecil@holycross.edu\\

\noindent Patrick J. Ryan

\noindent Department of Mathematics and Statistics

\noindent McMaster University

\noindent Hamilton, Ontario, Canada L8S4K1

\noindent email: ryanpj@mcmaster.ca

\end{document}